\documentclass[12pt]{article}

\usepackage{amsmath}
\usepackage{amssymb}
\usepackage{amsthm}

\textwidth= 6.5in
\textheight= 9.0in
\topmargin = -20pt
\evensidemargin=0pt
\oddsidemargin=0pt
\headsep=25pt
\parskip=10pt
\font\smallit=cmti10
\font\smalltt=cmtt10
\font\smallrm=cmr9

\newtheorem{theorem}{Theorem}[section]

\theoremstyle{definition}
\newtheorem{remark}[theorem]{Remark}

\numberwithin{equation}{section}
%\numberwithin{theorem}{section}
%\numberwithin{remark}{section}

\makeatletter
    \def\thebibliography#1{\section*{\@mkboth
      {}{}}\list
      {[\arabic{enumi}]}{\settowidth\labelwidth{[#1]}\leftmargin\labelwidth
    \advance\leftmargin\labelsep
    \usecounter{enumi}}
    \def\newblock{\hskip .11em plus .33em minus .07em}
    \sloppy\clubpenalty4000\widowpenalty4000
    \sfcode`\.=1000\relax}
    \makeatother

\begin{document}

\begin{center}
{\bf COMBINATORICS OF RAMANUJAN-SLATER TYPE IDENTITIES}
\vskip 20pt
{\bf James McLaughlin}\\
{\smallit Department of Mathematics, West Chester University, West Chester, PA 19383, USA}\\
{\tt jmclaughl@wcupa.edu}\\
\vskip 10pt
{\bf Andrew V. Sills}\\
{\smallit Department of Mathematical Sciences, Georgia Southern University, Statesboro, GA 30460,
USA}\\
{\tt ASills@GeorgiaSouthern.edu}\\
\end{center}
\vskip 30pt
\centerline{\smallit Received: , Accepted: , Published: } % We will fill in the dates
\vskip 30pt

\centerline{\bf Abstract}
\noindent
We provide the missing member of a family of four $q$-series
identities related to the modulus 36, the other members having been found by
Ramanujan and Slater.   We  examine combinatorial
implications of the identities in this family, and of some of the
identities we considered in ``Identities of the Ramanujan-Slater type related to the 
moduli 18 and 24," [\emph{J. Math. Anal. Appl.} \textbf{344}/2 (2008) 765--777].

\pagestyle{myheadings}
\markright{\smalltt INTEGERS: \smallrm ELECTRONIC JOURNAL OF COMBINATORIAL NUMBER THEORY \smalltt x (200x), \#Axx\hfill} % We will fill in the x's

\thispagestyle{empty}
\baselineskip=15pt
\vskip 30pt

\addtocounter{section}{1}
\section*{\normalsize \thesection. Introduction}
The Rogers-Ramanujan identities,
\begin{equation}\label{RR1}
  \sum_{j=0}^\infty \frac{ q^{j^2} }{ (q;q)_j } =
  \underset{k\equiv\pm 1\hskip -3mm\pmod{5}}{\prod_{k\geqq 1}}\frac{ 1}{1-q^k}
\end{equation}
and
\begin{equation}\label{RR2}
  \sum_{j=0}^\infty \frac{ q^{j^2+j} }{ (q;q)_j } =
   \underset{k\equiv\pm 2\hskip -3mm\pmod{5}}{\prod_{k\geqq 1}}\frac{ 1}{1-q^k},
\end{equation}
where
\begin{align*}
  (a;q)_j &:= \prod_{k=0}^{j-1} (1-a q^k)%,\\
% ,\\
%  (a_1, a_2, \dots, a_r;q)_\infty &:= \prod_{k=1}^r (a_k; q)_\infty,
\end{align*}
were first proved by L.J. Rogers~\cite{R94} in 1894 and later independently rediscovered (without proof) by
S. Ramanujan~\cite[Vol II, p. 33]{M16}.  Many additional ``$q$-series = infinite product" identities were
found by Ramanujan and recorded in his lost notebook~\cite{AB05}, \cite{AB07}.  A large collection
of such identities was produced by L.J. Slater~\cite{S52}.

Just as the Rogers-Ramanujan identities~\eqref{RR1}, \eqref{RR2} are a family of two
similar
identities where the
infinite products are related to the modulus 5, most Rogers-Ramanujan type identities
exist in a family of several similar identities where the sum sides are similar and
the product sides involve some
common modulus.

In most cases Ramanujan and Slater found all members a given family, but in a few cases
they found just one or two members of a family of four or five identities.
In~\cite{MS07}, we found some ``missing" members of families of
identities related to the moduli 18 and 24 where Ramanujan and/or Slater had found
one or two of the family members, as well as two new complete families.

In this paper, we find the missing member in a family of four
identities related to the modulus 36. We  examine combinatorial
implications of the identities in this family, and of some of the
identities we considered in~\cite{MS07}.

\vskip 30pt

\addtocounter{section}{1}
\section*{\normalsize \thesection. Combinatorial Definitions}
Informally, a partition of an integer $n$ is a representation of $n$ as a sum of
positive integers where the order of the summands is considered irrelevant.  Thus the five partitions
of $4$ are $4$ itself, $3+1$, $2+2$, $2+1+1$, and $1+1+1+1$.  The summands
are called the ``parts" of the partition, and since the order of the parts is
irrelevant, $2+1+1$, $1+2+1$, and $1+1+2$ are all considered to be the same partition
of $4$.  It is often convenient to impose a canonical ordering for the parts and to separate
parts with
commas instead of plus signs, and so we make the following definitions:

  A partition $\lambda$ of an integer $n$ into $\ell$ parts is an $\ell$-tuple of positive
integers $( \lambda_1, \lambda_2, \dots, \lambda_\ell)$ where
  \[ \lambda_i \geqq \lambda_{i+1} \mbox{   for $1\leqq i \leqq \ell-1$,  and} \]
  \[ \sum_{i=1}^\ell \lambda_i = n .\]
The number of parts $\ell = \ell(\lambda)$ of $\lambda$ is also called the \emph{length} of $\lambda$.
The sum of the parts of $\lambda$ is called the \emph{weight} of $\lambda$ and is
denoted $|\lambda|$.

 Thus in this notation, the five partitions of $4$ are $(4)$, $(3,1)$, $(2,1,1)$, and $(1,1,1,1)$.

 In~\cite{A07}, G. Andrews considers some of the implications of generalizing the notion of
partition to include the possibility of some negative integers as parts.  We may formalize
this idea with the following definitions:

 A \emph{signed partition} $\sigma$ of an integer $n$ is a partition pair $(\pi, \nu)$ where
\[ n = | \pi | - | \nu|  . \]
We may call $\pi$ (resp. $\nu$) the \emph{positive (resp. negative) subpartition of $\sigma$ }
and $\pi_1, \pi_2, \dots, \pi_{\ell(\pi)}$ (resp. $\nu_1, \nu_2, \dots, \nu_{\ell(\nu)}$) the
\emph{positive (resp. negative) parts} of $\sigma$.

Thus $( (6,3,3,1), (4,2,1,1) )$, which represents $6+3+3+1-1-1-2-4$, is an example of a
signed partition of $5$.  Of course, there are infinitely many unrestricted
signed partitions of any integer, but when we place restrictions on how parts may appear,
signed partitions arise naturally in the study of certain $q$-series.

\begin{remark}
Notice that the way we have defined signed partitions, the ``negative parts" are
positive numbers (which count negatively toward the weight of the signed partition), much as the 
``imaginary part" of a complex number is real.
\end{remark}

\vskip 30pt

\addtocounter{section}{1}
\section*{\normalsize \thesection. Partitions and $q$-series Identities of Ramanujan and Slater}
Using ideas that originated with Euler, MacMahon~\cite[vol. II, ch. III]{M16} and Schur~\cite{S17}
independently realized that~\eqref{RR1} and~\eqref{RR2} imply the following partition identities:

\begin{theorem}[First Rogers-Ramanujan identity---combinatorial version]  For all integers $n$,
the number of partitions $\lambda$ of $n$ where
   \begin{equation} \lambda_i - \lambda_{i+1} \geqq 2 \mbox{  for $1\leqq i \leqq \ell(\lambda)-1$}
   \label{RR1DiffCond}
   \end{equation}
 equals the number of partitions of $n$ into parts congruent to $\pm 1\pmod{5}$.
\end{theorem}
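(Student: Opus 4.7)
The plan is to show that both counts have the same generating function, namely the common value of the analytic identity \eqref{RR1}, so that equality of coefficients of $q^n$ yields the theorem.

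First I would compute the generating function for partitions $\lambda$ satisfying the difference condition \eqref{RR1DiffCond}. The key observation is that if $\lambda$ has exactly $j$ parts and satisfies $\lambda_i - \lambda_{i+1} \geqq 2$, then subtracting the ``staircase'' $(2j-1, 2j-3, \dots, 3, 1)$ from $\lambda$ (part by part) yields an ordinary partition $\mu$ with at most $j$ parts (allowing zero parts), and this correspondence is a bijection. Since the staircase has weight $1 + 3 + \cdots + (2j-1) = j^2$ and the generating function for partitions with at most $j$ parts is $1/(q;q)_j$, summing over $j$ gives
\begin{equation*}
\sum_{j=0}^\infty \frac{q^{j^2}}{(q;q)_j},
\end{equation*}
which is the left-hand side of \eqref{RR1}.

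Next I would compute the generating function for partitions of $n$ into parts $\equiv \pm 1 \pmod 5$. By the standard Eulerian product argument, a partition into parts from a set $S \subseteq \mathbb{Z}_{>0}$ is uniquely specified by the multiplicity with which each $k \in S$ appears, so its generating function is $\prod_{k \in S}(1-q^k)^{-1}$. Taking $S = \{k \geqq 1 : k \equiv \pm 1 \pmod 5\}$ yields exactly the right-hand side of \eqref{RR1}.

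Finally, invoking the analytic identity \eqref{RR1}, the two generating functions are equal as formal power series in $q$; comparing coefficients of $q^n$ gives the desired combinatorial equality for every $n$. The only substantive step is the staircase bijection used to derive the sum side; the product side is routine, and the deepest ingredient, the analytic identity \eqref{RR1} itself, is assumed as an already established result of Rogers and Ramanujan. Thus no new analytic work is required, and the main point is to verify that the staircase subtraction is indeed a weight-preserving bijection, which I would argue by exhibiting its inverse (add the staircase back).
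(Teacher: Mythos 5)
Your proof is correct, and it is the classical argument: the paper itself does not prove this theorem but simply cites it as the MacMahon--Schur consequence of \eqref{RR1}, obtained exactly by the Euler-style reasoning you describe (staircase bijection for the sum side, multiplicity expansion for the product side, then equate coefficients via the analytic identity). Your staircase subtraction is indeed a weight-preserving bijection, since the gap condition $\lambda_i-\lambda_{i+1}\geqq 2$ guarantees the resulting parts are nonnegative and nonincreasing, so no gap remains to fill.
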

\begin{theorem}[Second Rogers-Ramanujan identity---combinatorial version]  For all integers $n$,
the number of partitions $\lambda$ of $n$ where
   \begin{equation} \lambda_i - \lambda_{i+1} \geqq 2 \mbox{  for $1\leqq i \leqq \ell(\lambda)-1$}
   \label{RR2DiffCond}
   \end{equation}and
   \begin{equation} \label{NoOnes} \lambda_{\ell(\lambda)} > 1, \end{equation}
 equals the number of partitions of $n$ into parts congruent to $\pm 2\pmod{5}$.
\end{theorem}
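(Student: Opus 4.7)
The plan is to derive the combinatorial statement from the analytic identity \eqref{RR2} by giving the standard generating-function interpretation of each side.

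First I would handle the sum side of \eqref{RR2}. Given a partition $\lambda = (\lambda_1,\lambda_2,\dots,\lambda_j)$ with exactly $j$ parts that satisfies the difference condition \eqref{RR2DiffCond} and the restriction \eqref{NoOnes}, the smallest such partition in weight is $(2j, 2j-2, \dots, 4, 2)$, whose weight is $2+4+\cdots+2j = j^2 + j$. Subtracting this ``staircase'' part by part, that is, defining $\mu_i := \lambda_i - 2(j-i+1)$ for $1\leqq i \leqq j$, yields a weakly decreasing sequence $\mu_1 \geqq \mu_2 \geqq \cdots \geqq \mu_j \geqq 0$ of nonnegative integers with $|\mu| = |\lambda| - (j^2+j)$. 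Conversely every such $\mu$ lifts to a unique $\lambda$ with the required properties. Since the generating function for partitions into at most $j$ nonnegative parts is $1/(q;q)_j$, summing over $j \geqq 0$ shows that
\[
\sum_{j=0}^{\infty} \frac{q^{j^2+j}}{(q;q)_j}
\]
is exactly the generating function for partitions of $n$ satisfying \eqref{RR2DiffCond} and \eqref{NoOnes}.

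Next I would handle the product side. Expanding each geometric factor,
\[
\underset{k\equiv\pm 2\hskip -3mm\pmod{5}}{\prod_{k\geqq 1}}\frac{1}{1-q^k}
= \underset{k\equiv\pm 2\hskip -3mm\pmod{5}}{\prod_{k\geqq 1}}\sum_{m_k \geqq 0} q^{k m_k},
\]
and the coefficient of $q^n$ on the right counts the number of ways to write $n = \sum k m_k$ with $k$ ranging over positive integers congruent to $\pm 2 \pmod 5$, i.e., the number of partitions of $n$ into parts $\equiv \pm 2 \pmod 5$. Finally, invoking the analytic identity \eqref{RR2} equates these two generating functions, and comparing coefficients of $q^n$ yields the combinatorial statement.

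The only real obstacle is the bookkeeping in the first step: one must verify that the ``no ones'' condition combined with gaps of at least $2$ forces the minimal configuration to be $(2j, 2j-2, \dots, 2)$ rather than $(2j-1, 2j-3, \dots, 1)$, which is precisely what produces the extra $+j$ in the exponent $q^{j^2+j}$ as opposed to $q^{j^2}$ in \eqref{RR1}. Once that shift is correctly set up, the rest is a routine application of \eqref{RR2}.
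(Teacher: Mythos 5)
Your proposal is correct: the staircase subtraction $\mu_i = \lambda_i - 2(j-i+1)$ correctly accounts for the exponent $q^{j^2+j}$, the Euler expansion of the product side is standard, and invoking \eqref{RR2} finishes the argument. The paper itself offers no proof of this theorem, citing MacMahon and Schur instead, but the argument it alludes to (``ideas that originated with Euler'') is exactly the generating-function derivation you give, so your approach matches the intended one.
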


When studying sets of partitions where the appearance or exclusion of parts is governed by
difference conditions such as~\eqref{RR1DiffCond}, it is often useful to introduce a second
parameter $a$.  The exponent on $a$ indicates the length of a partition being enumerated,
while the exponent on $q$ indicates the weight of the partition.

For example, it is standard to generalize~\eqref{RR2} and~\eqref{RR1} as follows:
\begin{align}
  F_1 (a,q) &:= \sum_{j=0}^\infty \frac{  a^j q^{j^2+j} }{ (q;q)_j } = \frac{1}{(aq;q)_\infty}
    \sum_{k=0}^\infty \frac{ (-1)^j a^{2j} q^{j(5j+3)/2} (a;q)_j (1-aq^{2j+1}) }{  (q;q)_j  } \label{aRR2} \\
  F_2 (a,q) &:= \sum_{j=0}^\infty \frac{  a^j q^{j^2} }{ (q;q)_j } = \frac{1}{(aq;q)_\infty}
    \sum_{k=0}^\infty \frac{ (-1)^j a^{2j} q^{j(5j-1)/2} (a;q)_j (1-aq^{2j}) }{ (1-a) (q;q)_j  }, \label{aRR1}
\end{align} where \[ (a;q)_\infty := \prod_{k=0}^\infty (1-a q^k). \]
It is then easily seen that $F_1(a,q)$ and $F_2(a,q)$ satisfy the following system of
$q$-difference equations:
\begin{align}
   F_1(a,q) &= F_2(aq,q) \label{F1qDE}\\
   F_2(a,q) &= F_1(a,q)  + aq F_1(aq,q). \label{F2qDE}
\end{align}

Notice that there are straightforward combinatorial interpretations to~\eqref{F1qDE} 
and~\eqref{F2qDE}.  Equation~\eqref{F1qDE} states that if we start with the collection
of partitions satisfying~\eqref{RR1DiffCond} and add $1$ to each part (i.e. replace
$a$ by $aq$),  then we obtain the set of partitions that satsify~\eqref{RR2DiffCond}
and~\eqref{NoOnes}; the difference condition is maintained, but the new partitions
will have no ones.   The left hand side of~\eqref{F2qDE} generates partitions that
satisfy~\eqref{RR1DiffCond} while the right hand side segregates these partitions
into two classes: those where no ones appear (generated by $F_1(a,q)$) and those
where a unique one appears (generated by $aq F_1(aq,q)$).

\begin{remark}
It may seem awkward to have the $a$-generalization of the first (resp. second) 
Rogers-Ramanujan
identity labeled $F_2(a,q)$ (resp. $F_1(a,q)$), but this is actually standard practice
(see, e.g. Andrews~\cite[Ch. 7]{A76}).  Here and in certain generalizations, the 
subscript on $F$ corresponds to one more than the maximum number of ones
which can appear in the partitions enumerated by the function.
\end{remark}

\begin{remark} \label{agen}
While the $a$-generalizations are useful for studying the relevant partitions, the price paid for generalizing~\eqref{RR1} to~\eqref{aRR1}  and~\eqref{RR2} to~\eqref{aRR2}
is that the $a$-generalizations no longer have infinite product representations; only in
the $a=1$ cases will Jacobi's triple product identity~\cite[p. 15, Eq. (1.6.1)]{GR04} allow the right hand sides of~\eqref{RR2}
and~\eqref{RR1} to be transformed into infinite products.
\end{remark}

  An exception to Remark~\ref{agen} may be found in one of the identities in Ramanujan's lost
notebook~\cite[Entry 5.3.9]{AB07}; cf.~\cite[Eq. (1.16)]{MS07}:
\begin{equation}
\sum_{j=0}^\infty \frac{ q^{j^2}  (q^3;q^6)_j }{ (q;q^2)_j^2 (q^4;q^4)_j } =
\underset{j\equiv 1\hskip-3mm \pmod{2} \mbox{ \scriptsize{or} } j\equiv \pm 2{
\hskip-3mm\pmod{12}} }{\prod_{j\geqq 1}} \frac{1}{1-q^j} . \label{RamMod12}
\end{equation}
Equation~\eqref{RamMod12} admits an $a$-generalization with an infinite product:
\begin{equation}
\sum_{j=0}^\infty \frac{a^j q^{j^2} (q^3;q^6)_j }
{ (q;q^2)_j (aq;q^2)_j (q^4;q^4)_j } =
\prod_{j\geqq 1} \frac{1+aq^{4j-2} + a^2 q^{8j-4} }{1-aq^{2j-1}} . \label{aRamMod12}
\end{equation}
Notice that the right hand side of~\eqref{aRamMod12} is easily seen to be equal to
\[ \sum_{n, \ell \geqq 0} s(\ell, n ) a^\ell q^n, \]
where $s(\ell, n)$ denotes the number of partitions of $n$
into exactly $\ell$ parts where no even part appears
more than twice nor is divisible by $4$.
Note also that the right hand side of~\eqref{RamMod12}
generates partitions where parts may appear as in
Schur's 1926 partition theorem~\cite{S26} (i.e. partitions into parts congruent
to $\pm 1\pmod{6}$),
dilated by a factor of $2$, along with unrestricted appearances of odd parts.
It is a fairly common phenomenon for a Rogers-Ramanujan type identity to
generate partitions whose parts are restricted according to a well-known
partition theorem, dilated by a factor of $m$, and where nonmultiples of $m$
may appear without restriction.  See, e.g., Connor~\cite{C72} and Sills~\cite{S07}.

A partner to~\eqref{RamMod12} was found by Slater~\cite[p. 164, Eq. (110), corrected]{S52},
cf.~\cite[Eq. (1.19)]{MS07}:
\begin{equation}
\sum_{j=0}^\infty \frac{ q^{j^2+2j}
(q^3;q^6)_j }{ (q;q^2)_j (q;q^2)_{j+1} (q^4;q^4)_j } =
\underset{j\equiv 1\hskip-3mm \pmod{2} \mbox{ \scriptsize{or} } j\equiv \pm 4{
\hskip-3mm\pmod{12}} }{\prod_{j\geqq 1}} \frac{1}{1-q^j} . \label{Slater110}
\end{equation}
An $a$-generalization of~\eqref{Slater110} is
\begin{equation}\label{aSlater110}
\sum_{j=0}^\infty \frac{a^j q^{j^2+2j}  (q^3;q^6)_j }
{ (q;q^2)_j (aq;q^2)_{j+1} (q^4;q^4)_j } =
\prod_{j\geqq 1} \frac{1+aq^{4j} + a^2 q^{8j} }{1-aq^{2j-1}}  =
\sum_{n, \ell \geqq 0} t(\ell,n) a^\ell q^n,
\end{equation}
where $t(\ell,n)$ denotes the number of partitions of $n$ into $\ell$ parts
where even parts appear at most twice and are divisible by $4$.

\begin{remark}
An explanation as to why~\eqref{RamMod12} and~\eqref{Slater110} admit
$a$-generalizations which include infinite products and~\eqref{RR1} and~\eqref{RR2}
do not, may be found in the theory of basic hypergeometric series.
The Rogers-Ramanujan identities~\eqref{RR1} and~\eqref{RR2} arise as
limiting
cases of Watson's $q$-analog of Whipple's theorem~\cite{W29},\cite[p. 43, Eq. (2.5.1)]{GR04};
see~\cite[pp. 44--45, \S2.7]{GR04}.
In contrast,~\eqref{aRamMod12}
and~\eqref{aSlater110} are special cases of Andrews's $q$-analog of
Bailey's ${}_2 F_1 (\frac 12)$ sum~\cite[p. 526, Eq. (1.9)]{A73},\cite[p. 354, (Eq. II.10)]{GR04}.
\end{remark}

\begin{remark}
S. Corteel and J. Lovejoy interpreted~\eqref{RamMod12} and~\eqref{Slater110}
combinatorially using overpartitions in~\cite{CL07}.
\end{remark}

\vskip 30pt

\addtocounter{section}{1}
\section*{\normalsize \thesection. A Family of Ramanujan and Slater}
\addtocounter{subsection}{1}
\subsection*{\normalsize \thesubsection. A long-lost relative}
Let us define
\[ Q(w,x) := (-wx^{-1}, -x, w; w)_\infty (w x^{-2}, wx^2; w^2)_\infty, \]
where
\[  (a_1, a_2, \dots, a_r;w)_\infty := \prod_{k=1}^r (a_k; w)_\infty. \]
Then it is clear that an identity is missing from the family
\begin{align}
\sum_{j=0}^\infty \frac{ q^{2j(j+2) } (q^3;q^6)_j }{ (q^2;q^2)_{2j+1} (q;q^2)_j }
&= \frac{ Q(q^{18},q^7)}{(q^2;q^2)_\infty} \mbox{ \quad (Slater~\cite[Eq. (125)]{S52})} \label{Slater125}\\
\sum_{j=0}^\infty \frac{ q^{2j(j+1) } (q^3;q^6)_j }{ (q^2;q^2)_{2j+1} (q;q^2)_j }
&= \frac{ Q(q^{18},q^5)}{(q^2;q^2)_\infty} \mbox{ \quad (Slater~\cite[Eq. (124)]{S52})}\label{Slater124}\\
\sum_{j=0}^\infty \frac{ q^{2j^2 } (q^3;q^6)_j }{ (q^2;q^2)_{2j} (q;q^2)_j }
&= \frac{ Q(q^{18},q^3)}{(q^2;q^2)_\infty} \mbox{ \quad (Ramanujan~\cite[Entry 5.3.4]{AB07})}.
\label{RamMod36}
\end{align}

The following identity completes the above family:
\begin{equation}
\sum_{j=0}^\infty \frac{ q^{2j(j+1) } (q^3;q^6)_j }{ (q^2;q^2)_{2j} (q;q^2)_{j+1} }
= \frac{ Q(q^{18},q)}{(q^2;q^2)_\infty}. \label{NewMod36}
\end{equation}
\begin{theorem}
Identity~\eqref{NewMod36} is valid.
\end{theorem}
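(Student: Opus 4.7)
The plan is to bootstrap on the three companion identities \eqref{Slater125}, \eqref{Slater124}, and \eqref{RamMod36} rather than attack the sum on the left of \eqref{NewMod36} afresh via a Bailey pair. The key observation is that a short algebraic manipulation expresses the new sum as a $q$-linear combination of the left-hand sides of Slater's \eqref{Slater124} and \eqref{Slater125}, which by those known identities reduces the theorem to the pure theta-function identity
\begin{equation}\label{ThetaReduced}
Q(q^{18},q) \;=\; Q(q^{18},q^{5}) \;+\; q\,Q(q^{18},q^{7}).
\end{equation}

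For the reduction, note that $(q^2;q^2)_{2j+1}=(q^2;q^2)_{2j}(1-q^{4j+2})$, $(q;q^2)_{j+1}=(q;q^2)_j(1-q^{2j+1})$, and $1-q^{4j+2}=(1-q^{2j+1})(1+q^{2j+1})$, so that
\[
\frac{1}{(q^2;q^2)_{2j}(q;q^2)_{j+1}} \;=\; \frac{1+q^{2j+1}}{(q^2;q^2)_{2j+1}(q;q^2)_j} \;=\; \frac{1}{(q^2;q^2)_{2j+1}(q;q^2)_j} + \frac{q^{2j+1}}{(q^2;q^2)_{2j+1}(q;q^2)_j}.
\]
Multiplying through by $q^{2j(j+1)}(q^3;q^6)_j$, recognising that $q^{2j(j+1)+2j+1}=q\cdot q^{2j(j+2)}$, and summing over $j\geqq 0$ yields
\[ \sum_{j\geqq 0}\frac{q^{2j(j+1)}(q^3;q^6)_j}{(q^2;q^2)_{2j}(q;q^2)_{j+1}} \;=\; \sum_{j\geqq 0}\frac{q^{2j(j+1)}(q^3;q^6)_j}{(q^2;q^2)_{2j+1}(q;q^2)_j} \;+\; q\sum_{j\geqq 0}\frac{q^{2j(j+2)}(q^3;q^6)_j}{(q^2;q^2)_{2j+1}(q;q^2)_j}, \]
and applying \eqref{Slater124} and \eqref{Slater125} to the right-hand side reduces \eqref{NewMod36} to \eqref{ThetaReduced}.

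To prove \eqref{ThetaReduced} I would apply Jacobi's triple product to both factors of $Q(q^{18},x)$. Explicitly, $(-q^{18}/x,-x,q^{18};q^{18})_\infty=\sum_{m\in\mathbb{Z}} x^{m}q^{9m^{2}-9m}$, and after multiplying in $(q^{36};q^{36})_{\infty}$ one has $(q^{18}/x^{2},q^{18}x^{2},q^{36};q^{36})_\infty=\sum_{n\in\mathbb{Z}}(-1)^{n}x^{-2n}q^{18n^{2}}$. Specialising $x=q,q^{5},q^{7}$ turns \eqref{ThetaReduced} into an equality of three double theta sums over $(m,n)\in\mathbb Z^{2}$ with common denominator $(q^{36};q^{36})_\infty$; the quadratic forms involved are all $9m^{2}+18n^{2}$, twisted by linear terms depending on the chosen $x$. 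The identity can then be verified by partitioning $\mathbb Z^{2}$ according to the parities of suitable linear forms in $m$ and $n$ and matching exponents, or equivalently by invoking one of Ramanujan's standard two-dissection identities for the theta function $f(-a,-b)$ (as recorded, e.g., in Chapter~$16$ of Berndt's \emph{Ramanujan's Notebooks~III}).

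The principal obstacle is this last theta identity: although a direct numerical check through moderate order in $q$ confirms that the three theta functions really do satisfy \eqref{ThetaReduced}, isolating the correct index bijection that realises the relation (or the correct auxiliary theta identity that implies it) is the delicate step. The preceding algebraic reduction, by contrast, is completely mechanical once the factorisation $1-q^{4j+2}=(1-q^{2j+1})(1+q^{2j+1})$ is spotted.
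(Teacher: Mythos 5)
Your reduction of \eqref{NewMod36} to the two known identities is sound and is precisely the paper's first step: the factorization $1-q^{4j+2}=(1-q^{2j+1})(1+q^{2j+1})$ shows that the left side of \eqref{NewMod36} equals the left side of \eqref{Slater124} plus $q$ times the left side of \eqref{Slater125}, so everything hinges on the single theta-function relation
\begin{equation*}
Q(q^{18},q^{5})+q\,Q(q^{18},q^{7})=Q(q^{18},q).
\end{equation*}
But this is exactly where your proposal stops being a proof: you expand both sides via the triple product into double sums over $(m,n)\in\mathbb{Z}^{2}$ and then assert the identity ``can then be verified'' by finding an index bijection or citing an unspecified two-dissection, while conceding that isolating that bijection is the delicate step you have not carried out. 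A numerical check to moderate order does not close this; as written there is a genuine gap at the only nontrivial point of the argument.

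The missing idea is the quintuple product identity, which the paper invokes in the form
\begin{equation*}
Q(w,x)=(wx^{3},w^{2}x^{-3},w^{3};w^{3})_{\infty}+x\,(wx^{-3},w^{2}x^{3},w^{3};w^{3})_{\infty}.
\end{equation*}
Taking $w=q^{18}$ and $x=q^{5}$, $q^{7}$, $q$ turns each $Q$ into a sum of two products on base $q^{54}$; after rewriting the term containing $(q^{-3};q^{54})_{\infty}$ via $(q^{-3},q^{57};q^{54})_{\infty}=-q^{-3}(q^{3},q^{51};q^{54})_{\infty}$, the contribution $q^{5}(q^{3},q^{51},q^{54};q^{54})_{\infty}$ coming from $Q(q^{18},q^{5})$ is cancelled by $-q^{5}(q^{3},q^{51},q^{54};q^{54})_{\infty}$ coming from $q\,Q(q^{18},q^{7})$, and the two surviving products, $(q^{21},q^{33},q^{54};q^{54})_{\infty}+q(q^{15},q^{39},q^{54};q^{54})_{\infty}$, are precisely the quintuple-product expansion of $Q(q^{18},q)$. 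In short, working at the level of $Q$ itself---a product with a known two-term dissection---makes the step you flagged a three-line computation, whereas your double-sum route would force you to rediscover essentially this dissection by hand before the proof is complete.
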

\begin{proof}
We show that \eqref{Slater124}$+q\times$\eqref{Slater125} $=$
\eqref{NewMod36}. For the series side,
\begin{align*}
\sum_{j=0}^\infty \frac{ q^{2j(j+1) } (q^3;q^6)_j }{
(q^2;q^2)_{2j+1} (q;q^2)_j }+q\sum_{j=0}^\infty \frac{ q^{2j(j+2) }
(q^3;q^6)_j }{ (q^2;q^2)_{2j+1} (q;q^2)_j }&=\sum_{j=0}^\infty
\frac{ q^{2j(j+1) } (q^3;q^6)_j (1+q^{2j+1})}{ (q^2;q^2)_{2j+1}
(q;q^2)_j }\\ &=\sum_{j=0}^\infty \frac{ q^{2j(j+1) } (q^3;q^6)_j }{
(q^2;q^2)_{2j} (q;q^2)_{j+1} }.
\end{align*}
For the product side, we make use of the quintuple product identity:
\[
Q(w,x)=(w x^3,w^2x^{-3},w^3;w^3)_{\infty}+x (w x^{-3},w^2
x^3,w^3;w^3)_{\infty}.
\]
Hence
\begin{align*}
Q(q^{18},q^5)+q Q(q^{18},q^7) &=
(q^{33},q^{21},q^{54};q^{54})_{\infty}
+q^{5}(q^{3},q^{51},q^{54};q^{54})_{\infty}\\
&\hspace{60pt} +q((q^{39},q^{15},q^{54};q^{54})_{\infty}
+q^{7}(q^{-3},q^{57},q^{54};q^{54})_{\infty})\\
&= (q^{33},q^{21},q^{54};q^{54})_{\infty}
+q^{5}(q^{3},q^{51},q^{54};q^{54})_{\infty}\\
&\hspace{60pt} +q((q^{39},q^{15},q^{54};q^{54})_{\infty}
-q^{4}(q^{51},q^{3},q^{54};q^{54})_{\infty})\\
&=(q^{21},q^{33},q^{54};q^{54})_{\infty}+q(q^{15},q^{39},q^{54};q^{54})_{\infty}
=Q(q^{18},q).
\end{align*}
The result now follows.
\end{proof}

\addtocounter{subsection}{1}
\subsection*{\normalsize \thesubsection. Combinatorial Interpretations}
We interpret~\eqref{RamMod36} combinatorially.
\begin{theorem}\label{RamMod36Comb}
The number of signed partitions $\sigma = (\pi, \nu)$ of $n$, where
  \begin{itemize}
     \item $\ell(\pi)$ is even, and each positive part is even and $\geqq \ell(\pi)$, and
     \item the negative parts are odd, less than $\ell(\pi)$, and may appear at most twice
  \end{itemize}
equals the number of (ordinary) partitions of $n$ into parts congruent to $\pm 2, \pm 3, \pm 4,
\pm 8 \pmod{18}$.
\end{theorem}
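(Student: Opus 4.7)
The plan is to show that the two sides of identity \eqref{RamMod36} are respectively the generating functions for the signed partitions described in the hypothesis and for the ordinary partitions described in the conclusion; Ramanujan's identity then produces the desired equinumerosity.

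First I would group the signed partitions $\sigma=(\pi,\nu)$ by $2j = \ell(\pi)$ and interpret the $j$-th summand on the series side of \eqref{RamMod36}. Writing the positive parts as $\pi_i = 2j + 2\rho_i$, with $\rho$ an arbitrary partition having at most $2j$ parts, shows that $\pi$ contributes $q^{4j^2}/(q^2;q^2)_{2j}$ to the generating function. For the negative subpartition $\nu$, parts come from $\{1,3,\ldots,2j-1\}$ with each multiplicity in $\{0,1,2\}$; since each $\nu$ contributes $q^{-|\nu|}$, the total contribution from $\nu$ is
\[
\prod_{i=1}^j \bigl(1 + q^{-(2i-1)} + q^{-2(2i-1)}\bigr).
\]
Applying $1+y+y^2 = (1-y^3)/(1-y)$ with $y = q^{-(2i-1)}$ and then clearing negative powers of $q$ (using $\sum_{i=1}^j 2(2i-1) = 2j^2$) collapses this to $q^{-2j^2}(q^3;q^6)_j/(q;q^2)_j$. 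Multiplying yields the $j$-th series term $q^{2j^2}(q^3;q^6)_j/[(q^2;q^2)_{2j}(q;q^2)_j]$, and summation over $j \geqq 0$ reproduces the left-hand side of \eqref{RamMod36}.

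Next I would rewrite the product side. Using the elementary identity $(-q^a;q^{18})_\infty = (q^{2a};q^{36})_\infty/(q^a;q^{18})_\infty$ for $a \in \{3,15\}$, the two minus-sign factors in $Q(q^{18},q^3)$ combine with $(q^{18};q^{18})_\infty(q^{12},q^{24};q^{36})_\infty$ to give a numerator that ranges over every residue $\equiv 0 \pmod{6}$ modulo $36$, namely $(q^6;q^6)_\infty$. Thus
\[
Q(q^{18},q^3) \;=\; \frac{(q^6;q^6)_\infty}{(q^3,q^{15};q^{18})_\infty}.
\]
Dividing by $(q^2;q^2)_\infty$ and using $(q^2;q^2)_\infty/(q^6;q^6)_\infty = (q^2,q^4;q^6)_\infty$ produces
\[
\frac{Q(q^{18},q^3)}{(q^2;q^2)_\infty} \;=\; \frac{1}{(q^2,q^4;q^6)_\infty\,(q^3,q^{15};q^{18})_\infty},
\]
which is visibly the generating function for partitions with parts congruent to $\pm 2, \pm 3, \pm 4, \pm 8 \pmod{18}$.

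The only real obstacle is the algebraic manipulation on the negative-parts side: one must recognize that the generating function for partitions with parts in $\{1,3,\ldots,2j-1\}$, each appearing at most twice and weighted by $q^{-|\nu|}$, is precisely $q^{-2j^2}(q^3;q^6)_j/(q;q^2)_j$. The telescoping via $1+y+y^2 = (1-y^3)/(1-y)$ is what simultaneously explains the exponent $q^{2j^2}$ and the quotient $(q^3;q^6)_j/(q;q^2)_j$ on the series side of \eqref{RamMod36}. Once this is in hand, the remaining work is routine bookkeeping together with a direct invocation of \eqref{RamMod36}.
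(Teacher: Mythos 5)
Your proposal is correct and follows essentially the same route as the paper: the series side is matched to the signed partitions by extracting $q^{4j^2}/(q^2;q^2)_{2j}$ for the positive parts and recognizing $\prod_{k=1}^j(1+q^{-(2k-1)}+q^{-(4k-2)})$ as the negative-part generating function (your $1+y+y^2=(1-y^3)/(1-y)$ telescoping is exactly the paper's factoring of $q^{4k-2}$ out of each factor of $(q^3;q^6)_j/(q;q^2)_j$, run in the opposite direction), and the product side is identified with partitions into parts $\equiv \pm2,\pm3,\pm4,\pm8\pmod{18}$, which you verify in more detail than the paper's one-line assertion.
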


\begin{proof}
Starting with the left hand side of~\eqref{RamMod36}, we find
\begin{align*}
\sum_{j=0}^\infty \frac{ q^{2j^2 } (q^3;q^6)_j }{ (q^2;q^2)_{2j} (q;q^2)_j }
&= \sum_{j=0}^\infty \frac{ q^{2j^2} \prod_{k=1}^j (1+ q^{2k-1} + q^{4k-2}) }{(q^2;q^2)_{2j} } \\
&=\sum_{j=0}^\infty \frac{ q^{2j^2} \prod_{k=1}^j q^{4k-2}
(1+ q^{-(2k-1)} + q^{-(4k-2)}) }{(q^2;q^2)_{2j} } \\
&=\sum_{j=0}^\infty \frac{ q^{2j^2 + 4(1+2+\cdots+j) -2j} \prod_{k=1}^j
(1+ q^{-(2k-1)} + q^{-(4k-2)}) }{(q^2;q^2)_{2j} } \\
&=\sum_{j=0}^\infty \frac{ q^{4j^2}  }{(q^2;q^2)_{2j} } \times \prod_{k=1}^j (1+ q^{-(2k-1)} + q^{-(4k-2)})\
\end{align*}
Notice that \[ \frac{1}{(q;q)_{2j} } \] is the generating function for partitions into at most $2j$
parts, thus
\[ q^{2j^2} \frac{1}{(q;q)_{2j} }  =   
\frac{q^{\overbrace{j+j+j+\cdots+j}^{\mbox{\scriptsize{ $2j$ terms}}}} }{(q;q)_{2j} } \]
is the generating function for partitions into exactly $2j$ parts, where each part is at least $j$.
Thus
\[ q^{4j^2} \frac{1}{(q^2;q^2)_{2j} }  \]
is the generating function for partitions into exactly $2j$ parts, each of which is even and
at least $2j$.
  Also, $\prod_{k=1}^j (1+ q^{-(2k-1)} + q^{-(4k-2)})$ is the generating function for signed
partitions into odd negative parts $<2j$ and appearing at most twice each.
Summing over all $j$, we find that the left hand side of~\eqref{RamMod36} is the generating
function for signed partitions $\sigma = (\pi, \nu)$ of $n$, where
$\ell(\pi)$ is even, and each positive part is even and $\geqq \ell(\pi)$, and the negative parts are odd, less than $\ell(\pi)$, and may appear at most twice.

 Now the RHS of~\eqref{RamMod36} is
\begin{equation*}
\frac{ Q(q^{18},q^3)}{(q^2;q^2)_\infty} = \frac{ (-q^3, -q^{15};
q^{18}; q^{18})_\infty (q^{12}, q^{24};
q^{36})_\infty}{(q^2;q^2)_\infty} = \underset{i\equiv \pm 2, \pm 3,
\pm 4, \pm 8 \hskip -3mm \pmod{18}}{\prod_{i\geqq
1}}\frac{1}{1-q^i},
\end{equation*}
which is clearly the generating function for partitions into parts congruent to
$\pm 2, \pm 3, \pm 4, \pm 8\pmod{18}$.
\end{proof}

 \begin{remark} Andrews provided a different combinatorial interpretation of~\eqref{RamMod36}
 in~\cite[p. 175, Theorem 2]{A81}.
 \end{remark}

Following the ideas in the proof of Theorem~\ref{RamMod36Comb}, the analogous
combinatorial interpretation
of Identity~\eqref{Slater124} is as follows.
\begin{theorem}
The number of signed partitions $\sigma = (\pi, \nu)$ of $n$, where
  \begin{itemize}
     \item $\ell(\pi)$ is odd, and each positive part is even and $\geqq \ell(\pi)-1$, and
     \item the negative parts are odd, less than $\ell(\pi)$, and may appear at most twice
  \end{itemize}
equals the number of (ordinary) partitions of $n$ into parts
congruent to $\pm 2, \pm 4, \pm 5, \pm 6 \pmod{18}$.
\end{theorem}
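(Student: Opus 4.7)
The plan is to follow the template set by the proof of Theorem~\ref{RamMod36Comb}, treating the two sides of identity~\eqref{Slater124} separately.

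For the series side, I first use the factorization $1-q^{6k-3} = (1-q^{2k-1})(1+q^{2k-1}+q^{4k-2})$ to rewrite
\[
\frac{(q^3;q^6)_j}{(q;q^2)_j} = \prod_{k=1}^{j}\bigl(1+q^{2k-1}+q^{4k-2}\bigr).
\]
Next I factor $q^{4k-2}$ out of each of these $j$ factors, producing an overall $q^{2j^2}$ since $\sum_{k=1}^{j}(4k-2)=2j^2$. Combined with the $q^{2j(j+1)}$ already in the summand, the series becomes
\[
\sum_{j=0}^{\infty} \frac{q^{4j^2+2j}}{(q^2;q^2)_{2j+1}} \prod_{k=1}^{j} \bigl(1+q^{-(2k-1)}+q^{-(4k-2)}\bigr).
\]
Because $4j^2+2j = 2j(2j+1)$, the factor $q^{4j^2+2j}/(q^2;q^2)_{2j+1}$ generates partitions into exactly $2j+1$ even parts, each of size $\geqq 2j$. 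Setting $\ell(\pi)=2j+1$ (which is odd) gives the lower bound $2j = \ell(\pi)-1$, matching the theorem's condition on positive parts. The remaining product generates the negative signed parts: each is odd, bounded above by $2j-1 < \ell(\pi)$, and appears at most twice.

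For the product side, I would expand
\[
Q(q^{18},q^5) = (-q^{13},-q^5,q^{18};q^{18})_\infty (q^8,q^{28};q^{36})_\infty,
\]
convert the two factors with negative sign via $(-a;q)_\infty = (a^2;q^2)_\infty/(a;q)_\infty$, and refine all factors to base $q^{36}$. The numerator then contains only even-residue classes modulo $36$ (namely $8,10,18,26,28$ and $0$), each of which cancels with a corresponding factor of $(q^2;q^2)_\infty$ in the denominator. What survives in the denominator is the set of residues $\{2,4,6,12,14,16,20,22,24,30,32,34\}\pmod{36}$ coming from $(q^2;q^2)_\infty$, together with $\{5,13,23,31\}\pmod{36}$ from the two remaining negative-sign factors. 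Reducing mod $18$ collapses these to $\{2,4,5,6,12,13,14,16\}$, i.e.\ $\pm 2,\pm 4,\pm 5,\pm 6 \pmod{18}$, exactly as required.

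The main obstacle is the mod-$36$ bookkeeping on the product side, where care is needed to verify that the even residues appearing in the numerator are exactly those cancelled by $(q^2;q^2)_\infty$, and that the residues $\{5,13,23,31\}$ pair off correctly under the identification $r \leftrightarrow r+18$ to produce the $\pm 5$ class. The series-side argument is essentially parallel to the one in Theorem~\ref{RamMod36Comb}; the only subtlety is that the shift in the $q$-exponent produces $q^{2j(2j+1)}$ instead of $q^{(2j)^2}$, so that the lower bound on positive parts emerges as $\ell(\pi)-1$ rather than $\ell(\pi)$, which explains the slightly altered condition in the statement.
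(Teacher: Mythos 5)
Your proposal is correct and follows essentially the same route as the paper, which proves this theorem implicitly by invoking the ideas of the proof of Theorem~\ref{RamMod36Comb}: extract $q^{4k-2}$ from each factor $1+q^{2k-1}+q^{4k-2}$ to get $q^{2j(2j+1)}/(q^2;q^2)_{2j+1}$ times the generating function for the odd negative parts, then identify the infinite product side. The only difference is that you carry out the mod-36 bookkeeping for $Q(q^{18},q^5)/(q^2;q^2)_\infty$ explicitly, which the paper simply asserts, and your computation of the surviving residue classes $\pm2,\pm4,\pm5,\pm6\pmod{18}$ checks out.
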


We next interpret \eqref{Slater125} combinatorially. Note that the
theorem equates the number in a certain class of signed partitions
of $n+1$ with the number in a certain class of regular partitions of
$n$.
\begin{theorem}
The number of signed partitions $\sigma = (\pi, \nu)$ of $n+1$,
where
  \begin{itemize}
     \item $\ell(\pi)$ is odd, and each positive part is odd and $\geqq \ell(\pi)$, and
     \item the negative parts are odd, less than $\ell(\pi)$, and may appear at most twice
  \end{itemize}
equals the number of (ordinary) partitions of $n$ into parts
congruent to $\pm 2, \pm 6, \pm 7, \pm 8 \pmod{18}$.
\end{theorem}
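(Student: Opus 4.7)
The plan is to mirror the proof of Theorem 4.2 (the combinatorial interpretation of \eqref{RamMod36}), with two modifications: here the positive parts are odd rather than even, and the signed partitions have weight $n+1$ rather than $n$, the shift being accounted for by an extra factor of $q$.

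For the series side of \eqref{Slater125}, first use $(q^3;q^6)_j/(q;q^2)_j=\prod_{k=1}^j(1+q^{2k-1}+q^{4k-2})$. Then, as in Theorem 4.2, factor $q^{4k-2}$ out of the $k$th bracket so that it becomes $1+q^{-(2k-1)}+q^{-(4k-2)}$; this contributes an overall $q^{2j^2}$, transforming the leading power from $q^{2j(j+2)}$ to $q^{4j^2+4j}$. Multiplying the entire series by $q$ converts $q^{4j^2+4j}$ into $q^{(2j+1)^2}$, so the $j$th term of $q$ times the series becomes
\[
\frac{q^{(2j+1)^2}}{(q^2;q^2)_{2j+1}}\prod_{k=1}^j\left(1+q^{-(2k-1)}+q^{-(4k-2)}\right).
\]
The fraction generates partitions into exactly $2j+1$ odd parts each at least $2j+1$ (by writing each such part as $2j+1$ plus a nonnegative even increment), and the product generates the negative parts of a signed partition: odd, strictly less than $2j+1$, each appearing at most twice. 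Setting $\ell(\pi)=2j+1$ and summing over $j\geq 0$, the $q$-multiplied series becomes $\sum_m c_m q^m$, where $c_m$ counts the signed partitions of $m$ described in the theorem. Equivalently, the coefficient of $q^n$ in the original series side of \eqref{Slater125} equals $c_{n+1}$.

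For the product side $Q(q^{18},q^7)/(q^2;q^2)_\infty$, rewrite $(-q^7;q^{18})_\infty$ as $(q^{14};q^{36})_\infty/(q^7;q^{18})_\infty$ and $(-q^{11};q^{18})_\infty$ as $(q^{22};q^{36})_\infty/(q^{11};q^{18})_\infty$. Using the elementary identity $(q^r;q^{18})_\infty=(q^r,q^{r+18};q^{36})_\infty$, merge $(q^4,q^{22};q^{36})_\infty=(q^4;q^{18})_\infty$ and $(q^{14},q^{32};q^{36})_\infty=(q^{14};q^{18})_\infty$, so that the numerator of $Q(q^{18},q^7)$ reduces to $(q^{18};q^{18})_\infty(q^4,q^{14};q^{18})_\infty$, contributing residues $0,4,14\pmod{18}$. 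Cancelling these against the residues of $(q^2;q^2)_\infty(q^7,q^{11};q^{18})_\infty$ in the denominator leaves $\prod_{i\equiv\pm 2,\pm 6,\pm 7,\pm 8\pmod{18}}1/(1-q^i)$, the generating function for ordinary partitions of $n$ into parts in those residue classes.

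Equating coefficients of $q^n$ on the two sides of \eqref{Slater125} then yields the theorem. The delicate step is the residue-arithmetic simplification of $Q(q^{18},q^7)/(q^2;q^2)_\infty$; the series-side interpretation is a direct adaptation of the argument for Theorem 4.2, with the extra factor of $q$ accounting precisely for the shift from weight $n$ to weight $n+1$.
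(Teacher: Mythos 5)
Your proposal is correct and follows essentially the same route as the paper: you rewrite the summand of \eqref{Slater125} so that (after the extra factor of $q$) it becomes $q^{(2j+1)^2}/(q^2;q^2)_{2j+1}$ times $\prod_{k=1}^j\bigl(1+q^{-(2k-1)}+q^{-(4k-2)}\bigr)$, interpreted exactly as in the proof of the mod-36 theorem for \eqref{RamMod36}, and you reduce $Q(q^{18},q^7)/(q^2;q^2)_\infty$ to the product over residues $\pm 2,\pm 6,\pm 7,\pm 8\pmod{18}$ just as the paper does (implicitly, by analogy with its treatment of $Q(q^{18},q^3)$). The only difference is that you spell out the product-side residue bookkeeping that the paper leaves to the reader, and your arithmetic there checks out.
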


\begin{proof}
The proof is similar to that of Theorem \ref{RamMod36Comb}, except
that
\[
\sum_{j=0}^\infty \frac{ q^{2j(j+2) } (q^3;q^6)_j }{
(q^2;q^2)_{2j+1} (q;q^2)_j }= \frac{1}{q}\sum_{j=0}^\infty \frac{
q^{4j^2+4j+1}  }{(q^2;q^2)_{2j+1} } \times \prod_{k=1}^j (1+
q^{-(2k-1)} + q^{-(4k-2)}),
\]
and since
\[
4j^2+4j+1=\underbrace{(2j+1)+(2j+1)+\dots + (2j+1)}_{2j+1 \text{
terms }},
\]
it follows that
\[
\frac{ q^{4j^2+4j+1}  }{(q^2;q^2)_{2j+1} }
\]
is the generating function for partitions into exactly $2j+1$ parts,
each of which is odd and at least $2j+1$.
\end{proof}

Lastly, we give a combinatorial interpretation of \eqref{NewMod36}.
\begin{theorem}
The number of signed partitions $\sigma = (\pi, \nu)$ of $n+1$,
where
  \begin{itemize}
   \item   $\pi$ contains an odd positive part $m$ (which may be
repeated), exactly $m-1$ positive even parts, all $\geqq m-1$, and
 \item negative parts are all odd, $<m$, and appear at most twice,
  \end{itemize}
equals the number of (ordinary) partitions of $n$ into parts
congruent to $\pm 1, \pm 4, \pm 6, \pm 8 \pmod{18}$.
\end{theorem}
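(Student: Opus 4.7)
The plan is to follow the template established in the proof of Theorem~\ref{RamMod36Comb} and its analogues: I will interpret the $j$-th term of the series side of~\eqref{NewMod36} as a generating function for a piece of the signed-partition family, and separately recognize the product side as the generating function for partitions into parts $\equiv \pm 1,\pm 4,\pm 6,\pm 8\pmod{18}$. Because the theorem counts signed partitions of $n+1$ rather than $n$, I expect an overall factor of $q$ to surface during the manipulation.

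First I would rewrite the series side by using $(q^3;q^6)_j/(q;q^2)_j = \prod_{k=1}^j(1+q^{2k-1}+q^{4k-2})$, factoring $(q;q^2)_{j+1} = (q;q^2)_j(1-q^{2j+1})$ out of the denominator, and extracting $\prod_{k=1}^j q^{4k-2} = q^{2j^2}$ from the triple-term product. This converts the left-hand side of~\eqref{NewMod36} into
\[
\sum_{j=0}^\infty \frac{q^{4j^2+2j}}{(q^2;q^2)_{2j}(1-q^{2j+1})} \prod_{k=1}^j \bigl(1+q^{-(2k-1)}+q^{-(4k-2)}\bigr).
\]
Multiplying through by $q$ regroups the prefactor as $\dfrac{q^{2j+1}}{1-q^{2j+1}}\cdot\dfrac{q^{(2j)(2j)}}{(q^2;q^2)_{2j}}$, and with $m := 2j+1$ the three factors in the $j$-th summand read off exactly the three bulleted clauses of the theorem: $q^{m}/(1-q^m)$ generates one or more copies of the odd part $m$; $q^{(m-1)^2}/(q^2;q^2)_{m-1}$ generates exactly $m-1$ even parts each $\geqq m-1$, by the same ``minimum-part'' argument used in the proof of Theorem~\ref{RamMod36Comb}; and the surviving $\prod_{k=1}^j(1+q^{-(2k-1)}+q^{-(4k-2)})$ generates the odd negative parts $<m$, each appearing at most twice. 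Summing over $j\geqq 0$ then gives the weighted enumerator of all signed partitions described in the theorem, and the extra factor of $q$ accounts for ``of $n+1$'' rather than ``of $n$''.

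For the product side I would expand $Q(q^{18},q) = (-q^{17},-q,q^{18};q^{18})_\infty(q^{16},q^{20};q^{36})_\infty$, replace each $(-q^{\pm 1};q^{18})_\infty$ using $1+q^a = (1-q^{2a})/(1-q^a)$, and merge the $(;q^{36})_\infty$ factors in pairs back into $(;q^{18})_\infty$ factors. After dividing by $(q^2;q^2)_\infty = (q^2,q^4,q^6,q^8,q^{10},q^{12},q^{14},q^{16},q^{18};q^{18})_\infty$, the classes $2,16,0\pmod{18}$ should cancel and the classes $1,17\pmod{18}$ should appear in the denominator, leaving
\[
\frac{Q(q^{18},q)}{(q^2;q^2)_\infty} = \frac{1}{(q,q^4,q^6,q^8,q^{10},q^{12},q^{14},q^{17};q^{18})_\infty},
\]
which is the desired generating function for partitions into parts $\equiv \pm 1,\pm 4,\pm 6,\pm 8\pmod{18}$.

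The main obstacle is bookkeeping rather than ideas: one must verify that the $(;q^{36})\to(;q^{18})$ regrouping of the quintuple-product factors and the subsequent cancellation against $(q^2;q^2)_\infty$ yield exactly the eight residue classes $\pm 1,\pm 4,\pm 6,\pm 8\pmod{18}$, with no stray surplus or deficit. The series-side manipulation and the combinatorial reading of its three factors are direct analogues of the arguments already carried out for~\eqref{RamMod36} and~\eqref{Slater124}, so no new technique should be required there.
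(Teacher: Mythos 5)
Your proposal is correct and follows essentially the same route as the paper: the same factorization of the $j$-th term into $q^{2j+1}/(1-q^{2j+1})$, $q^{4j^2}/(q^2;q^2)_{2j}$, and the negative-parts product $\prod_{k=1}^j(1+q^{-(2k-1)}+q^{-(4k-2)})$, with the overall factor of $q$ accounting for the shift from $n$ to $n+1$. Your explicit reduction of $Q(q^{18},q)/(q^2;q^2)_\infty$ to $1/(q,q^4,q^6,q^8,q^{10},q^{12},q^{14},q^{17};q^{18})_\infty$ checks out and merely spells out the product-side step the paper leaves as routine.
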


\begin{proof}
This time
\[
\sum_{j=0}^\infty \frac{ q^{2j(j+1) } (q^3;q^6)_j }{ (q^2;q^2)_{2j}
(q;q^2)_{j+1} }= \frac{1}{q}\sum_{j=0}^\infty \frac{ q^{4j^2}
}{(q^2;q^2)_{2j} } \frac{q^{2j+1}}{1-q^{2j+1}}\times \prod_{k=1}^j
(1+ q^{-(2k-1)} + q^{-(4k-2)}),
\]
so that, as before,
\[
\frac{ q^{4j^2}  }{(q^2;q^2)_{2j} }
\]
is the generating function for partitions into exactly $2j$ parts,
each of which is even and at least $2j$, and
\[
\frac{q^{2j+1}}{1-q^{2j+1}}
\]
generates partitions consisting of the part $2j+1$ and containing at
least one such part.
\end{proof}

\addtocounter{section}{1}
\section*{\normalsize \thesection. Combinatorial Interpretations of a Family of
Mod 18 Identities}
In~\cite{MS07}, we presented the following family of
Rogers-Ramanujan-Slater type identities related to the modulus 18:
{\allowdisplaybreaks
\begin{gather}
\sum_{j=0}^\infty \frac{  q^{j(j+1)} (-1;q^3)_j}{ (-1;q)_j
(q;q)_{2j} } = \frac{(q,q^8,q^9;q^9)_\infty
(q^7,q^{11};q^{18})_\infty}
{(q;q)_\infty} \label{m18-1}\\
\sum_{j=0}^\infty \frac{  q^{j^2} (-1;q^3)_j}{ (-1;q)_j (q;q)_{2j} }
= \frac{(q^2,q^7,q^9;q^9)_\infty (q^5,q^{13} ; q^{18})_\infty}
{(q;q)_\infty} \label{m18-2} \\
\sum_{j=0}^\infty \frac{  q^{j(j+1)} (-q^3;q^3)_j}{ (-q;q)_j
(q;q)_{2j+1} } = \frac{(q^3,q^6,q^9;q^9)_\infty
(q^3,q^{15};q^{18})_\infty}{(q;q)_\infty}
\label{m18-3} \\
\sum_{j=0}^\infty \frac{  q^{j(j+2)} (-q^3;q^3)_j } { (q^2;q^2)_j
(q^{j+2};q)_{j+1} } = \frac{(q^4,q^5,q^9;q^9)_\infty
(q,q^{17};q^{18})_\infty}{(q;q)_\infty} \label{m18-4}.
\end{gather}
}

We give a combinatorial interpretation of \eqref{m18-4}.

\begin{theorem}\label{t-4}
The number of signed partitions $\sigma = (\pi, \nu)$ of $n+2$,
wherein
  \begin{itemize}
     \item $\pi_1$, the largest positive part, is even,
     \item the integers $1,2,\dots, \frac{\pi_1}{2}-1$ all appear
     an even number of times and at least twice,
     \item the integer $\frac{\pi_1}{2}$ does not appear,
     \item the integers $\frac{\pi_1}{2}+1, \frac{\pi_1}{2}+2, \dots, \pi_1$
     all appear at least once, and
     \item there are exactly $\frac{\pi_1}{2}-1$
negative parts, each  $\equiv 1 \pmod{3}$ and $\leqq \frac{3\pi_1}{2}-2$,
with the parts greater than 1 occurring at most once
  \end{itemize}
equals the number of (ordinary) partitions of $n$ into parts
congruent to $\pm 2, \pm3, \pm 6, \pm 7, \pm 8 \pmod{18}$.
\end{theorem}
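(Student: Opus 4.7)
My plan is to follow the template of the proof of Theorem~\ref{RamMod36Comb} and its successors: decompose the $j$-th summand of the left-hand side of~\eqref{m18-4} into a product of pieces, each with a transparent combinatorial meaning matching one bullet in the theorem's description of $\sigma$. Writing $\pi_1 = 2j+2$ turns those bullets into: each of $1,\ldots,j$ appears $2m_i$ times with $m_i \geqq 1$; the value $j+1$ is absent; each of $j+2,\ldots,2j+2$ appears at least once; and there are exactly $j$ negative parts drawn from $\{1,4,7,\ldots,3j+1\}$, with $1$ unrestricted and the others occurring at most once.

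I would produce three matching generating functions. The identity
$$\frac{q^{j(j+1)}}{(q^2;q^2)_j} = \prod_{i=1}^j \frac{q^{2i}}{1-q^{2i}}$$
handles the small positive parts, each forced to appear an even number of times $\geqq 2$. The parallel shift
$$\frac{q^{(j+1)(3j+4)/2}}{(q^{j+2};q)_{j+1}} = \prod_{k=j+2}^{2j+2}\frac{q^k}{1-q^k}$$
handles the large positive parts, each forced to appear at least once. For the negative parts I would factor
$$(-q^3;q^3)_j = q^{3j(j+1)/2}\prod_{k=1}^j\bigl(1+q^{-3k}\bigr),$$
so that $q^{-j}\prod_{k=1}^j(1+q^{-3k})$ admits the following reading: start with $j$ copies of the negative part $1$ (accounting for $q^{-j}$); for each $k \in \{1,\ldots,j\}$, independently either leave the $k$-th slot as a $1$ or promote it to $3k+1$ by selecting the factor $q^{-3k}$. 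This is precisely the signed-partition configuration prescribed in the theorem.

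Multiplying the three pieces and collecting exponents gives
$$j(j+1) + \tfrac{(j+1)(3j+4)}{2} - j - \tfrac{3j(j+1)}{2} = j(j+2) + 2,$$
so summing over $j$ produces $q^2$ times the left-hand side of~\eqref{m18-4}; the extra $q^2$ absorbs the ``signed partitions of $n+2$'' offset. For the product side I would observe that the numerator $(q^4,q^5,q^9;q^9)_\infty (q,q^{17};q^{18})_\infty$ cancels precisely those factors of $(q;q)_\infty$ of degree $\equiv \pm 1,\pm 4,\pm 5 \pmod{18}$ together with the multiples of $9$, leaving the residues $\pm 2,\pm 3,\pm 6,\pm 7,\pm 8 \pmod{18}$ that appear on the right side of~\eqref{m18-4}.

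The main obstacle is the $q$-exponent bookkeeping and the combinatorial framing of the negative parts. The factor $(-q^3;q^3)_j$ superficially suggests parts divisible by $3$, whereas the theorem demands negative parts $\equiv 1 \pmod 3$; the reconciliation is supplied by the extra $q^{-j}$, which converts each ``$3k$'' into a ``$3k+1$'' by pre-installing a base $1$ in every slot. Verifying the arithmetic identity $j(j+1) + (j+1)(3j+4)/2 - j - 3j(j+1)/2 = j(j+2)+2$ is the single internal consistency check that confirms the decomposition is correctly balanced.
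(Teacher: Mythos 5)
Your proposal is correct and follows essentially the same route as the paper's own proof: the identical three-factor decomposition of the $j$-th summand of~\eqref{m18-4} (small parts $1,\dots,j$ in even multiplicity via $q^{j(j+1)}/(q^2;q^2)_j$, large parts $j+2,\dots,2j+2$ via the shifted factor, and negative parts via $q^{-j}\prod_{k=1}^j(1+q^{-3k})$), the same $q^2$ offset, and the same reduction of the product side to parts $\equiv \pm 2,\pm 3,\pm 6,\pm 7,\pm 8 \pmod{18}$. Your exponent check $j(j+1)+(j+1)(3j+4)/2-j-3j(j+1)/2=j(j+2)+2$ matches the paper's bookkeeping, so no gaps remain.
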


\begin{proof}
We consider the general term on the left side of \eqref{m18-4}.
\begin{align*}
\frac{  q^{j(j+2)} (-q^3;q^3)_j } { (q^2;q^2)_j (q^{j+2};q)_{j+1} }
&=
\frac{q^{j^2+2j}}{(q^2;q^2)_j}\frac{q^{(3j^2+3j)/2}}{\prod_{k=0}^{j}(1-q^{j+2+k})}
\prod_{k=1}^{j}(1+q^{-3k})\\
&=
\frac{1}{q^2}\frac{q^{j^2+j}}{(q^2;q^2)_j}\frac{q^{(3j^2+7j+4)/2}}{\prod_{k=0}^{j}(1-q^{j+2+k})}
q^{-j}\prod_{k=1}^{j}(1+q^{-3k})
\end{align*}
The factors
\[
\frac{q^{j^2+j}}{(q^2;q^2)_j}= \frac{q^{2+4+6+\dots +
2j}}{(q^2;q^2)_j}
\]
generates parts in $\{2,4,6,\dots,2j\}$ where
each part appears at least once.  Then
by mapping each even part
$2r$ to $r+r$,  we have parts $\{ 1,2,3, \dots, j \}$
where each part appears an even number of times
and at least twice.

The factors
\[
\frac{q^{(3j^2+7j+4)/2}}{\prod_{k=0}^{j}(1-q^{j+2+k})}
=\frac{q^{(j+2)+(j+3)+\dots + (2j+2)}}{\prod_{k=0}^{j}(1-q^{j+2+k})}
\]
generates partitions from the parts $\{j+2, j+3, \dots, 2j+1, 2j+2\}$
and where each part appears at least once. Lastly,
\[
q^{-j}\prod_{k=1}^{j}(1+q^{-3k})
\]
is the generating function for signed partitions with negative
parts that are congruent to 1 modulo 3, $\leq 3j+1$, the parts greater
than 1 occur at most once, and the total number of parts is $j$ (the
number of 1's being $j$ minus the number of other parts).

Upon summing over $j\geq 0$, we get that
\[
\sum_{j=0}^{\infty}\frac{q^{j^2+j}}{(q^2;q^2)_j}\frac{q^{(3j^2+7j+4)/2}}{\prod_{k=0}^{j}(1-q^{j+2+k})}
q^{-j}\prod_{k=1}^{j}(1+q^{-3k})
\]
is the generating function for signed partitions with the properties
itemized in the statement of the theorem.

The right side of \eqref{m18-4} is
\[
\frac{(q^4,q^5,q^9;q^9)_\infty
(q,q^{17};q^{18})_\infty}{(q;q)_\infty}= \underset{i\equiv \pm 2,
\pm 3, \pm 6, \pm 7, \pm 8 \hskip -3mm \pmod{18}}{\prod_{i\geqq
1}}\frac{1}{1-q^i},
\]
which is the generating function for partitions into parts congruent
to $\pm 2,$ $\pm 3, \pm 6, \pm 7, \pm 8\,\, \hskip -3mm \pmod{18}$.
\end{proof}

The corresponding combinatorial interpretation of \eqref{m18-2} is
given by the following theorem.

\begin{theorem}\label{t-2}
The number of signed partitions $\sigma = (\pi, \nu)$ of $n$, where
  \begin{itemize}
     \item $\pi_1$, the largest  positive part, is even,
     \item the integers $1,2,\dots, \frac{\pi_1}{2}-1$ all appear
     an even number of times and at least twice,
     \item the integers $\frac{\pi_1}{2}, \frac{\pi_1}{2}+1, \dots, \pi_1$
     all appear at least once, and
     \item there are exactly $\frac{\pi_1}{2}-1$
negative parts, each  $\equiv 2 \pmod{3}$ and $\leqq \frac{3\pi_1}{2}-1$,
with the parts greater than 2 occurring at most once,
  \end{itemize}
equals the number of (ordinary) partitions of $n$ into parts
congruent to $\pm 1, \pm3, \pm 4, \pm 6, \pm 8 \pmod{18}$.
\end{theorem}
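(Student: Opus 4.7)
My plan is to mirror the proof of Theorem~\ref{t-4}: rewrite each summand of the series side of~\eqref{m18-2} as a product of three factors, each of which is the generating function for one of the classes of parts (small positive, large positive, negative) in the theorem's description. The identifications would then follow the template of the proofs of Theorems~\ref{RamMod36Comb} and~\ref{t-4}.

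The first key step is to simplify the ratio $(-1;q^3)_j/(-1;q)_j$. For $j\ge 1$, the $k=0$ factors both equal $2$ and cancel, and using $1+q^{3k}=(1+q^k)(1-q^k+q^{2k})$ followed by pulling powers of $q$ out of $(-q^3;q^3)_{j-1}$ gives
\[
\frac{(-1;q^3)_j}{(-1;q)_j}\;=\;\prod_{k=1}^{j-1}(1-q^k+q^{2k})\;=\;\frac{q^{3j(j-1)/2}}{(-q;q)_{j-1}}\prod_{k=1}^{j-1}(1+q^{-3k}).
\]
The product $\prod(1+q^{-3k})$ is the expected form for the negative-parts generating function. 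The next ingredient is the elementary identity $(q;q)_{2j}(-q;q)_{j-1}=(q^2;q^2)_{j-1}(q^j;q)_{j+1}$, which follows from $1-q^{2k}=(1-q^k)(1+q^k)$ together with $(q;q)_{j-1}(q^j;q)_{j+1}=(q;q)_{2j}$. Combining these, the summand becomes
\[
\frac{q^{j^2}(-1;q^3)_j}{(-1;q)_j(q;q)_{2j}}
=\frac{1}{q^2}\cdot\frac{q^{j(j-1)}}{(q^2;q^2)_{j-1}}\cdot\frac{q^{3j(j+1)/2}}{(q^j;q)_{j+1}}\cdot q^{-2(j-1)}\prod_{k=1}^{j-1}(1+q^{-3k})
\]
(valid for $j\ge 1$), with the exponent split $j^2+3j(j-1)/2=-2+j(j-1)+3j(j+1)/2-2(j-1)$ chosen so that each of the three positive-$q$ factors matches the minimum weight of its combinatorial class.

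Next I would interpret these three factors, taking $\pi_1=2j$. The factor $q^{j(j-1)}/(q^2;q^2)_{j-1}=q^{2+4+\cdots+2(j-1)}/(q^2;q^2)_{j-1}$ generates partitions into parts from $\{2,4,\ldots,2(j-1)\}$ each occurring at least once, and the substitution $2r\mapsto r+r$ converts these into partitions in which each of $1,2,\ldots,\pi_1/2-1$ appears an even number of times and at least twice. The factor $q^{3j(j+1)/2}/(q^j;q)_{j+1}=q^{j+(j+1)+\cdots+2j}/\prod_{k=0}^{j}(1-q^{j+k})$ generates partitions in which each part in $\{\pi_1/2,\ldots,\pi_1\}=\{j,\ldots,2j\}$ appears at least once. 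Finally,
\[
q^{-2(j-1)}\prod_{k=1}^{j-1}(1+q^{-3k})=\prod_{k=1}^{j-1}\bigl(q^{-2}+q^{-(3k+2)}\bigr)
\]
generates signed partitions having exactly $\pi_1/2-1=j-1$ negative parts, each $\equiv 2\pmod 3$, each at most $3\pi_1/2-1=3j-1$, with parts greater than~$2$ appearing at most once (the $q^{-2}$ option contributes a negative $2$; the $q^{-(3k+2)}$ option contributes the large negative part $3k+2$). Summing over $j$ assembles the signed-partition generating function, while the product side of~\eqref{m18-2} factors (as in the proof of Theorem~\ref{RamMod36Comb}) as $\prod_{i\equiv\pm1,\pm3,\pm4,\pm6,\pm8\,\pmod{18}}1/(1-q^i)$, the generating function for ordinary partitions into the listed residue classes, yielding the theorem.

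The main obstacle is the first step: the analogous ratio $(q^3;q^6)_j/(q;q^2)_j=\prod(1+q^{2k-1}+q^{4k-2})$ in the proof of Theorem~\ref{RamMod36Comb} has an immediate combinatorial meaning, whereas here $(-1;q^3)_j/(-1;q)_j=\prod(1-q^k+q^{2k})$ contains a minus sign that must be converted to the expected form via $1+q^{3k}=(1+q^k)(1-q^k+q^{2k})$; the stray $(-q;q)_{j-1}$ that appears is then cancelled by the denominator identity of the second step before the combinatorial interpretation becomes visible.
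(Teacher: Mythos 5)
Your decomposition is, factor for factor, the same as the paper's. The published proof of Theorem~\ref{t-2} rewrites the general term of \eqref{m18-2} as
\[
\frac{q^{j^2-j}}{(q^2;q^2)_{j-1}}\cdot\frac{q^{(3j^2+3j)/2}}{\prod_{k=0}^{j}(1-q^{j+k})}\cdot q^{-2j}\prod_{k=1}^{j-1}(1+q^{-3k}),
\]
using exactly the cancellations you describe: the ratio $(-1;q^3)_j/(-1;q)_j$, the extraction of $q^{3j(j-1)/2}$ from $(-q^3;q^3)_{j-1}$, and the identity $(q;q)_{2j}(-q;q)_{j-1}=(q^2;q^2)_{j-1}(q^j;q)_{j+1}$. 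Your reading of the first two factors (parts $1,\dots,\tfrac{\pi_1}{2}-1$ with even multiplicity at least two, and parts $\tfrac{\pi_1}{2},\dots,\pi_1$ each at least once, with $\pi_1=2j$) also matches the paper's. Up to this point everything you write is correct.

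The divergence is in where you park the residual power of $q$, and it is not cosmetic. You split off a global $1/q^2$ and interpret $q^{-2(j-1)}\prod_{k=1}^{j-1}(1+q^{-3k})$ as exactly $j-1=\tfrac{\pi_1}{2}-1$ negative parts; summing over $j$ then shows that your signed partitions are generated by $q^{2}$ times the left side of \eqref{m18-2}, so what you have actually proved is that the number of such signed partitions of $n+2$ (not of $n$) equals the number of ordinary partitions of $n$ --- and even that breaks at $n=0$, where the $j=0$ term contributes $q^0$ with no corresponding signed partition of $2$. The paper instead absorbs the residue into the negative-parts factor as $q^{-2j}\prod_{k=1}^{j-1}(1+q^{-3k})$, which is the generating function for exactly $j=\tfrac{\pi_1}{2}$ negative parts $\equiv 2\pmod{3}$ (one more negative $2$ than in your reading) with no global shift; that bookkeeping is consistent with ``signed partitions of $n$'' and handles $j=0$ correctly. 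You should be aware that the theorem as printed (``of $n$'' together with ``exactly $\tfrac{\pi_1}{2}-1$ negative parts'') matches neither computation and is false as literally stated: for $n=1$ the product side counts the single partition $(1)$, while the smallest signed partition satisfying the printed conditions has weight $1+2=3$. So your combinatorial identifications are sound, but the closing ``yielding the theorem'' does not follow for the statement as given; to land on a true statement you must either replace $\tfrac{\pi_1}{2}-1$ by $\tfrac{\pi_1}{2}$ (the emendation implicit in the paper's own display) or replace ``of $n$'' by ``of $n+2$'' and treat the empty-partition boundary term separately.
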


\begin{proof}
The proof is similar to that of Theorem \ref{t-4}, except we rewrite
the general term on the left side of \eqref{m18-2} as follows
\begin{align*}
\frac{  q^{j^2} (-1;q^3)_j } { (-1;q)_j (q;q)_{2j} } &=
\frac{q^{j^2}}{(q^2;q^2)_{j-1}}\frac{q^{(3j^2-3j)/2}}{\prod_{k=0}^{j}(1-q^{j+k})}
\prod_{k=1}^{j-1}(1+q^{-3k})\\
&=
\frac{q^{j^2-j}}{(q^2;q^2)_{j-1}}\frac{q^{(3j^2+3j)/2}}{\prod_{k=0}^{j}(1-q^{j+k})}
q^{-2j}\prod_{k=1}^{j-1}(1+q^{-3k}).
\end{align*}
\end{proof}

The identity at \eqref{m18-1} may be interpreted combinatorially as
follows.

\begin{theorem}\label{t-1}
The number of signed partitions $\sigma = (\pi, \nu)$ of $n$, where
  \begin{itemize}
     \item $\pi_1$, the largest  positive part, is even,
     \item the integers $1,2,\dots, \frac{\pi_1}{2}-1$ all appear
     an even number of times and at least twice,
     \item the integers $\frac{\pi_1}{2}, \frac{\pi_1}{2}+1, \dots, \pi_1$
     all appear at least once, and
     \item there are exactly $\frac{\pi_1}{2}-1$
negative parts, each  $\equiv 1 \pmod{3}$ and $\leqq \frac{3\pi_1}{2}-2$,
with the parts greater than 1 occurring at most once,
  \end{itemize}
equals the number of (ordinary) partitions of $n$ into parts
congruent to $\pm 2, \pm3, \pm 4, \pm 5, \pm 6 \pmod{18}$.
\end{theorem}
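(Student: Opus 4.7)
The plan is to follow the strategy used for Theorems~\ref{t-4} and~\ref{t-2}: rewrite the $j$-th term on the left side of~\eqref{m18-1} as a product of three factors, each admitting a transparent combinatorial interpretation, and then sum over $j\geqq 0$. First I would simplify the quotient $(-1;q^3)_j/(-1;q)_j = (-q^3;q^3)_{j-1}/(-q;q)_{j-1}$ and use the identities $(q^2;q^2)_{j-1} = (q;q)_{j-1}(-q;q)_{j-1}$ together with $(q;q)_{2j} = (q;q)_{j-1}\prod_{k=0}^{j}(1-q^{j+k})$ to obtain
\[
\frac{q^{j(j+1)}(-1;q^3)_j}{(-1;q)_j (q;q)_{2j}} = \frac{q^{j(j+1)}(-q^3;q^3)_{j-1}}{(q^2;q^2)_{j-1}\prod_{k=0}^{j}(1-q^{j+k})}.
\]
Then factoring $q^{3k}$ out of each $(1+q^{3k})$, so that $(-q^3;q^3)_{j-1} = q^{3j(j-1)/2}\prod_{k=1}^{j-1}(1+q^{-3k})$, and redistributing the exponents yields the three-factor decomposition
\[
\frac{q^{j^2-j}}{(q^2;q^2)_{j-1}} \cdot \frac{q^{(3j^2+3j)/2}}{\prod_{k=0}^{j}(1-q^{j+k})} \cdot q^{-j}\prod_{k=1}^{j-1}(1+q^{-3k}).
\]

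Each factor is then interpreted combinatorially, in parallel with the proofs of Theorems~\ref{t-4} and~\ref{t-2}. The first, rewritten as $q^{2+4+\cdots+2(j-1)}/(q^2;q^2)_{j-1}$, generates partitions whose parts lie in $\{2,4,\ldots,2(j-1)\}$ with each appearing at least once; doubling $2r\mapsto r+r$ converts these into the integers $1,2,\ldots,j-1$, each appearing an even number of times and at least twice. The second, $q^{j+(j+1)+\cdots+2j}/\prod_{k=0}^{j}(1-q^{j+k})$, generates partitions in which each of $j,j+1,\ldots,2j$ appears at least once, so the largest positive part is $\pi_1 = 2j$. The third factor, $q^{-j}\prod_{k=1}^{j-1}(1+q^{-3k})$, generates signed partitions in which, for each $S\subseteq\{1,\ldots,j-1\}$, one combines each chosen $q^{-3k}$ with one of the $j$ copies of $q^{-1}$ to form the negative part $-(3k+1)$ and retains the remaining $j-|S|$ copies of $-1$; the resulting $j$ negative parts are all $\equiv 1\pmod{3}$, bounded above by $3j-2 = \tfrac{3\pi_1}{2}-2$, and those greater than $1$ appear at most once.

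Summing over $j\geqq 0$ then produces the claimed signed-partition generating function. For the product side of~\eqref{m18-1}, a direct regrouping gives
\[
\frac{(q,q^8,q^9;q^9)_\infty (q^7,q^{11};q^{18})_\infty}{(q;q)_\infty} = \underset{i\equiv \pm 2,\pm 3,\pm 4,\pm 5,\pm 6 \hskip -3mm \pmod{18}}{\prod_{i\geqq 1}}\frac{1}{1-q^i},
\]
which is the generating function for ordinary partitions of $n$ into parts $\equiv \pm 2,\pm 3,\pm 4,\pm 5,\pm 6\pmod{18}$; equating coefficients of $q^n$ finishes the proof. The main obstacle is the algebraic bookkeeping in establishing the three-factor decomposition, and in particular verifying that the residual power is $q^{-j}$ rather than the $q^{-2j}$ appearing in the analogous decomposition for Theorem~\ref{t-2}; this single shift in exponent is exactly what moves the negative parts into the residue class $1\pmod{3}$ rather than $2\pmod{3}$, and hence selects the correct congruence classes modulo~$18$ on the product side.
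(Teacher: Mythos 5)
Your route is the paper's route: the three-factor form you display is verbatim the second line of the paper's own (very terse) proof of Theorem~\ref{t-1}, and your readings of the first two factors (parts $1,\dots,j-1$ each an even number of times and at least twice; parts $j,\dots,2j$ each at least once, so $\pi_1=2j$) are exactly the interpretations the paper imports from the proof of Theorem~\ref{t-4}. The algebraic bookkeeping, including the residual $q^{-j}$ versus the $q^{-2j}$ of Theorem~\ref{t-2}, checks out.

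There is, however, one point you pass over that you should not. Your own analysis of the third factor $q^{-j}\prod_{k=1}^{j-1}(1+q^{-3k})$ yields exactly $j=\frac{\pi_1}{2}$ negative parts (the $|S|$ parts $3k+1$ plus the $j-|S|$ leftover ones), whereas the statement you are proving demands exactly $\frac{\pi_1}{2}-1$ of them; the generating function for the latter would be $q^{-(j-1)}\prod_{k=1}^{j-1}(1+q^{-3k})$, which differs from your factor by $q^{-1}$. So the assertion that ``summing over $j$ produces the claimed signed-partition generating function'' is not literally true as written. In fact your count is the one consistent with \eqref{m18-1}: at $n=2$ the product side gives the single partition $(2)$, and the only admissible signed partition of $2$ is $((2,1),(1))$, which has one negative part, i.e.\ $\frac{\pi_1}{2}$, not $\frac{\pi_1}{2}-1=0$. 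What you have actually proved is the corrected statement (equivalently, one could retain $\frac{\pi_1}{2}-1$ negative parts but count signed partitions of $n+1$); the printed count appears to be a slip carried over from Theorem~\ref{t-4}, where $\pi_1=2j+2$ makes $\frac{\pi_1}{2}-1=j$ correct, and the same off-by-one occurs in Theorem~\ref{t-2}. Your write-up should flag this discrepancy explicitly rather than identify your interpretation with the theorem's wording.
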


\begin{proof}
The general term on the left side of \eqref{m18-1} may be written as
\begin{align*}
\frac{  q^{j^2+j} (-1;q^3)_j } { (-1;q)_j (q;q)_{2j} } &=
\frac{q^{j^2+j}}{(q^2;q^2)_{j-1}}\frac{q^{(3j^2-3j)/2}}{\prod_{k=0}^{j}(1-q^{j+k})}
\prod_{k=1}^{j-1}(1+q^{-3k})\\
&=
\frac{q^{j^2-j}}{(q^2;q^2)_{j-1}}\frac{q^{(3j^2+3j)/2}}{\prod_{k=0}^{j}(1-q^{j+k})}
q^{-j}\prod_{k=1}^{j-1}(1+q^{-3k}).
\end{align*}
\end{proof}

Finally, we provide a combinatorial interpretation of~\eqref{m18-3}
\begin{theorem}\label{t-3}
The number of signed partitions $\sigma = (\pi, \nu)$ of $n+1$,
wherein
  \begin{itemize}
     \item $\pi_1$, the largest positive part, is odd,
     \item the integers $1,2,\dots, \frac{\pi_1-1}{2}$ all appear
     an even number of times and at least twice,
     \item the integers $\frac{\pi_1-1}{2}+1, \frac{\pi_1-1}{2}+2, \dots, \pi_1$
     all appear at least once, and
     \item there are exactly $\frac{\pi_1-1}{2}$
negative parts, each  $\equiv 1 \pmod{3}$ and $\leqq \frac{3\pi_1}{2}-2$,
with the parts greater than 1 occurring at most once,
  \end{itemize}
equals the number of (ordinary) partitions of $n$ into parts
congruent to $1$, $2$, $4$, $5$, $7$ or $8$ modulo $9$, such that for any
nonnegative integer $j$,
$9j+1$ and $9j+2$ do not both appear, and for any nonnegative integer $k$, $9k+7$ and $9k+8$
do not both appear.
\end{theorem}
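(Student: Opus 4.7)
The plan is to follow the three-block strategy used in the proofs of Theorems~\ref{t-4}, \ref{t-2}, and~\ref{t-1}: rewrite the $j$-th summand of the series in~\eqref{m18-3} as a product of three pieces (two positive, one negative), and then read the product side of~\eqref{m18-3} as the corresponding generating function on ordinary partitions.

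First I would use the identities $(-q;q)_j (q;q)_j = (q^2;q^2)_j$ and $(q;q)_{2j+1}=(q;q)_j (q^{j+1};q)_{j+1}$ to rewrite the general summand of the left-hand side of~\eqref{m18-3} as
\[
\frac{q^{j(j+1)}\,(-q^3;q^3)_j}{(q^2;q^2)_j\,(q^{j+1};q)_{j+1}},
\]
and then pull out $(-q^3;q^3)_j = q^{3j(j+1)/2}\prod_{k=1}^{j}(1+q^{-3k})$. Balancing the exponents exactly as in the proof of Theorem~\ref{t-4}, with the ``$\pi_1$ odd'' bookkeeping leaving a single loose factor of $q$, would give
\[
q\cdot\frac{q^{j(j+1)}\,(-q^3;q^3)_j}{(-q;q)_j\,(q;q)_{2j+1}}
\;=\;\frac{q^{j(j+1)}}{(q^2;q^2)_j}\cdot\frac{q^{(j+1)(3j+2)/2}}{(q^{j+1};q)_{j+1}}\cdot q^{-j}\prod_{k=1}^{j}(1+q^{-3k}).
\]
Each factor on the right has a standard reading. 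The factor $q^{j(j+1)}/(q^2;q^2)_j = q^{2+4+\cdots+2j}/(q^2;q^2)_j$ enumerates partitions into parts in $\{2,4,\dots,2j\}$ with each part appearing at least once, which under the map $2r\mapsto r+r$ become partitions in which $1,2,\dots,\tfrac{\pi_1-1}{2}=j$ each appear an even number of times and at least twice. The factor $q^{(j+1)(3j+2)/2}/(q^{j+1};q)_{j+1}$ enumerates partitions in which each of $j+1,j+2,\dots,2j+1$ appears at least once, so the largest part is $\pi_1=2j+1$. The remaining factor $q^{-j}\prod_{k=1}^{j}(1+q^{-3k})$ is, exactly as in the proof of Theorem~\ref{t-4}, the generating function for a collection of exactly $j=\tfrac{\pi_1-1}{2}$ negative parts, each $\equiv 1\pmod 3$ and $\le 3j+1$, with parts greater than $1$ occurring at most once. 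Summing over $j\ge 0$ then identifies $q$ times the series in~\eqref{m18-3} with the generating function for the signed partitions described in the statement, which accounts for the ``$n+1$'' on the signed-partition side.

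For the product side of~\eqref{m18-3}, I would first cancel $(q^3,q^6,q^9;q^9)_\infty$ against the residues $0,\pm 3\pmod 9$ in $(q;q)_\infty$ to obtain
\[
\frac{(q^3,q^6,q^9;q^9)_\infty}{(q;q)_\infty}\;=\;\underset{r\in\{1,2,4,5,7,8\}}{\prod_{k\ge 0}}\frac{1}{1-q^{9k+r}},
\]
the generating function for partitions into parts congruent to $\pm 1,\pm 2,\pm 4\pmod 9$. The surplus factor $(q^3,q^{15};q^{18})_\infty$ is then absorbed using the elementary identity
\[
\frac{1-q^{a+b}}{(1-q^a)(1-q^b)}\;=\;\frac{1}{1-q^a}+\frac{q^{b}}{1-q^{b}},
\]
whose right side is manifestly the generating function for partitions using only part $a$ or only part $b$ (equivalently, those in which $a$ and $b$ do not both appear). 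Specializing $(a,b)=(9k+1,9k+2)$ for each $k\ge 0$ pairs the factor $1-q^{18k+3}$ of $(q^3;q^{18})_\infty$ with the restriction ``$9k+1$ and $9k+2$ are not both present,'' and $(a,b)=(9k+7,9k+8)$ does the analogous job for $(q^{15};q^{18})_\infty$. The residues $\pm 4\pmod 9$ remain unrestricted, so the product side of~\eqref{m18-3} is exactly the enumerator claimed in the theorem.

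The main obstacle will be the product-side reading: recognising the identity $\tfrac{1-q^{a+b}}{(1-q^a)(1-q^b)}=\tfrac{1}{1-q^a}+\tfrac{q^{b}}{1-q^{b}}$ and identifying the two surplus infinite products $(q^3;q^{18})_\infty$ and $(q^{15};q^{18})_\infty$ as the exact enumerators of the ``not both appear'' constraints on the pairs $(9k+1,9k+2)$ and $(9k+7,9k+8)$. The manipulation of the series side is routine given the template already established in the proofs of the three preceding theorems.
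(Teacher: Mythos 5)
Your proposal is correct and follows essentially the same route as the paper: the identical three-factor rewriting of the summand of~\eqref{m18-3} (with the same exponent $(3j^2+5j+2)/2=(j+1)(3j+2)/2$ and the loose factor of $q$ accounting for the $n+1$), and the identical factorization of the product side into $\frac{1}{(q^4,q^5;q^9)_\infty}\cdot\frac{(q^3;q^{18})_\infty}{(q,q^2;q^9)_\infty}\cdot\frac{(q^{15};q^{18})_\infty}{(q^7,q^8;q^9)_\infty}$. The only cosmetic difference is that where the paper cites the Andrews--Lewis enumeration result for $\frac{(q^{a+b};q^{18})_\infty}{(q^a,q^b;q^9)_\infty}$, you re-derive exactly that fact from the elementary identity $\frac{1-q^{a+b}}{(1-q^a)(1-q^b)}=\frac{1}{1-q^a}+\frac{q^b}{1-q^b}$, which is the content of their lemma.
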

\begin{proof}
The general term on the left side of~\eqref{m18-3} may be written as
\begin{align*}
\frac{  q^{j(j+1)} (-q^3;q^3)_j}{ (-q;q)_j (q;q)_{2j+1} }  &=
\frac{ q^{j^2 + j}}{(q^2;q^2)_j } \frac{ q^{(3j^2+3j)/2}   }{(q^{j+1};q)_{j+1} } \prod_{k=1}^j (1+q^{-3k}) \\
&= \frac 1q \frac{ q^{j^2 + j}}{(q^2;q^2)_j } \frac{ q^{(3j^2+5j+2)/2}   }{(q^{j+1};q)_{j+1} } q^{-j} \prod_{k=1}^j (1+q^{-3k}),
\end{align*} thus the interpretation of the left side is similar to that of the previous identities.

 The right side of~\eqref{m18-3} provides a challenge because of the double occurrence
of the factors $(q^3;q^{18})_\infty$ and $(q^{15};q^{18})_\infty$ in the numerator.  Accordingly,
we turn to a partition enumeration technique introduced by Andrews and Lewis.
In~\cite[p. 79, Eq. (2.2) with $k=9$]{AL01}, they show that
\begin{equation}
  \frac{ (q^{a+b} ; q^{18} )_\infty}{ (q^a, q^b; q^9)_\infty }
\end{equation}
is the generating function for partitions of $n$ into parts congruent to $a$ or $b$ modulo $9$
such that for any $k$, $9k+a$ and $9k+b$ do not both appear as parts, where $0<a<b<9$.

  With this in mind, we immediately see that
\[ \frac{(q^3,q^6,q^9;q^9)_\infty (q^3,q^{15};q^{18})_\infty}{(q;q)_\infty}
= \frac{1}{(q^4, q^5; q^9)_\infty} \times \frac{ (q^3;q^{18})_\infty }{(q,q^2;q^{9})_\infty}
\times \frac{(q^{15};q^{18})_\infty }{ (q^7, q^8; q^9)_\infty}
\]
generates the partitions stated in our theorem.
\end{proof}

\section*{\normalsize Acknowledgement}
We thank the referee for carefully reading the manuscript and supplying helpful comments, and the conference organizers for the opportunity to present at \emph{Integers 2007}.

\section*{\normalsize References}
\vskip -1cm

\end{document}